
\documentclass[11pt,leqno]{article}
\textwidth=125mm
\textheight= 185mm

\parindent = 8mm

\topmargin -0.5cm 
\frenchspacing
\usepackage{amsmath, amsfonts, theorem,color, 
}
\usepackage{latexsym}
\usepackage{graphicx}
\makeatletter
\def\@maketitle{\newpage
    \null
    \vskip .8truein
    \begin{center}%
     {\bf \@title \par}%
     \vskip 1.5em
     {\small
      \lineskip .5em
      \begin{tabular}[t]{c}\@author
      \end{tabular}\par}%
    \end{center}%
    \par
    \vskip .4truein}
\@addtoreset{equation}{section} \@addtoreset{theorem}{section}
\@addtoreset{lemma}{section} \@addtoreset{proposition}{section}
\@addtoreset{definition}{section} \@addtoreset{corollary}{section}
\@addtoreset{remark}{section}


\newcommand{\re}{{\mathbb R}}


\newtheorem{theorem}{Theorem}[section]
\newtheorem{lemma}{Lemma}[section]
\newtheorem{proposition}{Proposition}[section]

\newtheorem{remark}{Remark}[section]

  {\hfill$\Box$\bigskip\par}

\def\proof{\list{}{\setlength{\leftmargin}{0pt}
                      \parskip=0pt\parsep=0pt\listparindent=2em
                      \itemindent=0pt}\item[]\futurelet\testchar\@maybe}

\def\@maybe{\ifx[\testchar \let\next\@Opt
          \else \let\next\@NoOpt \fi \next}
\def\@Opt[#1]{{\it Proof of #1.\ }}\def\@NoOpt{{\it Proof.\ }}
\begin{document}
\title{\Large \bf Asymptotic behaviour for operators of Grushin type: invariant measure and singular perturbations.}

\author{{\large \sc Paola Mannucci, Claudio Marchi, Nicoletta Tchou}\\
 \rm Universit\`a degli Studi di Padova, Universit\'e de Rennes 1}
\maketitle

\begin{abstract}
\noindent This paper concerns singular perturbation problems where the dynamics of the fast variable evolve in the whole space according to an operator whose infinitesimal generator is formed by a Grushin type second order part and a Ornstein-Uhlenbeck first order part. \\
We prove that the dynamics of the fast variables admits an invariant measure and that the associated ergodic problem has a viscosity solution which is also regular and with logarithmic growth at infinity. These properties play a crucial role in the main theorem which establishes that the value functions of the starting perturbation problems converge to the solution of an effective problem whose operator and initial datum are given in terms of the associated invariant measure.
\end{abstract}
\noindent {\bf Keywords}:  Subelliptic equations, Grushin vector fields, invariant measure, singular perturbations, viscosity solutions, degenerate elliptic equations.\footnote{\date{\today}}

\noindent  {\bf 2010 AMS Subject classification:} 35B25, 49L25, 35J70, 35H20, 35B37, 93E20.


\section{Introduction}

This paper is devoted to study with PDE's methods some asymptotic features of processes described by the dynamics 
\begin{equation}\label{process}
dZ_t=b(Z_t)dt +\sqrt2\sigma(Z_t)dW_t\quad \textrm{for }t\in(0,+\infty),\qquad Z_0= x^0\in \re^2,
\end{equation}
where $W_t$ is a $2$-dimensional Brownian motion while the matrix $\sigma$ is degenerate and of Grushin type and the drift~$b$ is of Ornstein-Uhlenbeck type, namely 
\begin{equation}\label{sigma}
\sigma(x)=\left(\begin{array}{cc} 1&0\\0&x_1\end{array}\right), \ 
 b(x)=-\alpha x,\  \alpha>0.
\end{equation}
The columns of $\sigma$ in \eqref{sigma} satisfy H\"ormander condition: $X_1=(1,0)$ and $[X_1,X_2]=(0,1)$ span all $\re^2$. Hence, we have that $[X_1, X_2]=\partial_{x_2}$.\\
In particular, we shall investigate:\\
1) existence and uniqueness of the invariant measure of this process;\\
2) existence, uniqueness and regularity of the solution for the ergodic problem of the infinitesimal generator~${\cal L}$  (see \eqref{op_l});\\
3) the asymptotic behaviour as $\epsilon \to 0$ of the value function of optimal control problems driven by
\begin{equation}\label{dynXY}
\left\{\begin{array}{l}
dX_t=\tilde\phi (X_t,Y_t,u_t)dt+\sqrt2 \tilde \sigma(X_t,Y_t,u_t)dW_t,\qquad X_0=x\in \re^n\\
dY_t= \frac1\epsilon b(Y_t)dt+\frac{\sqrt2}{\sqrt\epsilon}\sigma(Y_t) dW_t,\qquad Y_0=y\in\re^2.
\end{array}\right.
\end{equation}

The paper is organized as follows: in Section 2 we prove existence and uniqueness of the invariant measure. In Section 3 we establish our main result on perturbation problem, to this end, we introduce the approximated ergodic problems and investigate the regularity of their solutions.

\section{Existence of the invariant measure}\label{sect:measure}
We consider the stochastic dynamics \eqref{process} with coefficients as in~\eqref{sigma}. The main aim of this section  is to prove existence and uniqueness of the invariant measure~$m$ associated to the process~\eqref{process}. To this goal, we use a Liouville property for the infinitesimal generator of~\eqref{process} (see~\cite{CDC} for other Liouville properties for Grushin operator in a semilinear framework with a superlinear growth for the zeroth order term).\\

Let us recall from \cite{B1} that a probability measure~$m$ on $\re^2$ is an {\it invariant measure} for process~\eqref{process} if, for each $u_0\in \mathbb L^\infty(\re^2)$, it satisfies
\begin{equation*}
\int_{\re^2} u(x,t)m(x)\, dx = \int_{\re^2} u_0(x)m(x)\, dx
\end{equation*}
where $u(x,t)=\mathbb E_x(u_0(X_t))$ is the solution to the parabolic Cauchy problem
\begin{equation*}
\partial_t u+{\cal L}u=0\quad \textrm{in }(0,+\infty)\times \re^2,\qquad u(x,0)=u_0(x)\quad \textrm{on }\re^2
\end{equation*}
where
\begin{equation}\label{op_l}
{\cal L}(y,q,Y):=- tr(\sigma\sigma^T Y)-b\cdot q\equiv -Y_{11}-y_1^2V_{22}-b(y)q
\end{equation}
is the {\it infinitesimal generator} of process~\eqref{process}.
For the sake of completeness, let us recall the result in \cite[Example 5.1]{MMT1}.

\begin{theorem}\label{mgen}
The diffusion process \eqref{process} admits exactly one invariant probability measure $m$.
\end{theorem}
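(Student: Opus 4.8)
The plan is to establish existence and uniqueness separately. For existence, I would look for the invariant measure as an explicit density solving the stationary Fokker-Planck (adjoint) equation $\mathcal{L}^* m = 0$. Since the drift is linear Ornstein-Uhlenbeck type $b(y) = -\alpha y$ and the diffusion is Grushin, I expect a Gaussian-like ansatz to be productive. Concretely, I would try $m(y) = C \exp(-\Phi(y))$ and substitute into $\mathcal{L}^* m = 0$, which for our operator reads $\partial_{y_1 y_1}m + \partial_{y_2 y_2}(y_1^2 m) - \operatorname{div}(\alpha y\, m) = 0$. Matching terms, I anticipate a potential of the form $\Phi(y) = \frac{\alpha}{2}(a y_1^2 + \beta y_1^2 y_2^2 + \dots)$ or, more plausibly given the $y_1^2$ weight on the second diffusion coefficient, a form where the $y_2$-marginal is Gaussian with $y_1$-dependent variance. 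The key computation is verifying that the chosen $\Phi$ annihilates $\mathcal{L}^* m$ and that $\int_{\re^2} e^{-\Phi}\,dy < +\infty$, so that $m$ can be normalized to a probability measure.

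For uniqueness, the natural tool—flagged in the introduction—is a Liouville-type property for the generator $\mathcal{L}$. The standard argument is that uniqueness of the invariant measure is equivalent to the statement that the only bounded (or suitably controlled) solutions of the stationary equation $\mathcal{L}u = 0$ on all of $\re^2$ are constants. I would therefore prove: if $u$ is a bounded viscosity solution of $\mathcal{L}u = 0$ in $\re^2$, then $u$ is constant. Given two invariant measures $m_1, m_2$, their difference would pair to zero against all such solutions, forcing $m_1 = m_2$. The Liouville property itself I would attack using the strong maximum principle together with a Lyapunov/barrier function exploiting the strongly confining Ornstein-Uhlenbeck drift: the inward-pointing drift $-\alpha y$ guarantees recurrence of the diffusion, and recurrence of a diffusion whose generator satisfies Hörmander's condition (as noted in the excerpt, $X_1$ and $[X_1,X_2]$ span $\re^2$, so $\mathcal{L}$ is hypoelliptic) yields both the Liouville property and uniqueness.

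The main obstacle is the degeneracy of the diffusion along the line $\{y_1 = 0\}$, where the second diffusion coefficient $y_1^2$ vanishes. On this set the operator $\mathcal{L}$ degenerates and classical elliptic maximum-principle arguments do not apply directly; one must rely on the Hörmander condition to restore hypoellipticity and on a careful propagation-of-maxima argument across $\{y_1=0\}$. Since the statement is quoted from \cite[Example 5.1]{MMT1} and framed in terms of a Liouville property for the Grushin generator, I would expect the cleanest route to invoke the general existence-and-uniqueness machinery for invariant measures of hypoelliptic diffusions with confining drift developed there, verifying only the two structural hypotheses: (i) hypoellipticity via the Hörmander bracket condition already checked in the introduction, and (ii) a Lyapunov inequality $\mathcal{L}V \ge c > 0$ outside a compact set for a proper function $V$ (for instance $V(y) = |y|^2$ or $V(y) = \log(1+|y|^2)$), whose verification is a short computation using $b(y) = -\alpha y$. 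Tightness of the family of empirical/transition measures then follows from this Lyapunov bound, giving existence via a Krylov-Bogolyubov argument, while uniqueness follows from irreducibility plus the strong Feller property, both consequences of hypoellipticity.
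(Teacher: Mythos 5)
There is a genuine gap, in two places. First, your primary existence route (an explicit Gaussian-like density annihilating $\mathcal{L}^*$) does not go through: writing the stationary Fokker--Planck equation in divergence form, $\partial_{y_1}\bigl(\partial_{y_1}m+\alpha y_1 m\bigr)+\partial_{y_2}\bigl(y_1^2\partial_{y_2}m+\alpha y_2 m\bigr)=0$, the natural zero-flux (detailed balance) ansatz forces simultaneously $m\propto e^{-\alpha y_1^2/2}$ in $y_1$ and $\partial_{y_2}\log m=-\alpha y_2/y_1^2$, i.e.\ a factor $e^{-\alpha y_2^2/(2y_1^2)}$, and these are incompatible. No closed-form density is available here, so the ``key computation'' you defer would in fact fail; the paper never attempts this and instead gets existence abstractly. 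Second, and more importantly, your proposed Lyapunov functions do not work for the full range $\alpha>0$ claimed in the theorem. With $V(y)=|y|^2$ one computes $tr(\sigma\sigma^T D^2V)=2(1+y_1^2)$, so $\mathcal{L}V=2(\alpha-1)y_1^2+2\alpha y_2^2-2$, which tends to $-\infty$ along the $y_1$-axis whenever $\alpha<1$ (and a similar obstruction kills $\log(1+|y|^2)$). The degenerate direction contributes a $-2y_1^2$ term that the Ornstein--Uhlenbeck drift $2\alpha y_1^2$ cannot dominate unless $\alpha>1$. This is exactly why the paper takes $W(y)=\frac{1}{12}y_1^4+\frac12 y_2^2$: then $tr(\sigma\sigma^T D^2W)=2y_1^2$ while $-b\cdot DW=\frac{\alpha}{3}y_1^4+\alpha y_2^2$, and the quartic beats the quadratic for every $\alpha>0$. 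Your ``short computation'' therefore needs a different, anisotropic $V$ tailored to the Grushin weight.

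Your fallback strategy (verify H\"ormander plus a Lyapunov inequality, then invoke Krylov--Bogolyubov for existence and strong Feller plus irreducibility for uniqueness) is in the right spirit and close to what the paper does, with one structural difference worth noting: the paper does not work directly with the degenerate generator. It first regularizes the diffusion matrix to $\sigma_\rho$ (appending a column $\rho$ so that $A_\rho=\sigma_\rho\sigma_\rho^T$ is locally positive definite), applies the machinery of \cite{MMT1} to get an invariant measure $m_\rho$ using the same function $W$ (which satisfies $\mathcal{L}_\rho W\ge1$ outside a compact set uniformly for small $\rho$), and then passes to the limit $\rho\to0$; uniqueness is delegated to \cite[Theorem 2.1]{MMT1}. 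Working directly with the hypoelliptic operator as you propose is plausible but would require you to actually establish non-explosion, the strong Feller property and irreducibility for an operator with unbounded coefficients, none of which you carry out.
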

\begin{proof}
Under our assumptions it is easy to check that the matrix 
$A_{\rho}=\sigma_{\rho}\sigma^T_{\rho}$ where
$$
\sigma_{\rho}(x)=\left(\begin{array}{ccc} 1&0&0\\0&x_1&\rho\end{array}\right)
$$
is such that  
$A_{\rho}\rightarrow A \textrm{ in } L^{\infty}$ and  $A_{\rho}$ is locally definite positive.
Moreover taking
$$W(x)=\frac{1}{12}x_1^4+\frac{1}{2}x_2^2,$$
we have
$$-tr(A_{\rho}(x)D^2 W(x)))=
- 2x_1^2-\rho^2.$$
$$-b(x)DW=
\frac{1}{3}\alpha x_1^4+\alpha x_2^2.$$
Then $\mathcal L_{\rho}W=-tr(\sigma_{\rho}\sigma^T_{\rho}D^2u)+b\cdot Du\geq 1$ is equivalent to the following condition
\begin{equation*}
\frac{1}{3}\alpha x_1^4+\alpha x_2^2 \geq  2 x_1^2+1+\rho^2,
\end{equation*}
hence
$W$
satisfies 
\begin{equation*}
\mathcal L_{\rho}W\geq 1 \quad \textrm{in }\overline{B(0,R_0)}^C,\quad
W\geq 0\quad \textrm{in } \overline{B(0,R_0)}^C,\quad \lim_{\vert x\vert\rightarrow \infty} W=\infty
\end{equation*}
for $\rho$ sufficiently small.

Then following the procedure used in \cite[Proposition 2.1]{MMT1}, using the function $W$, there exists an unique invariant measure $m_{\rho}$ for the process with diffusion $\sigma_{\rho}$, and arguing as in \cite[Theorem 2.1 (proof)]{MMT1} and 
using again the function $W$ we obtain the existence of the invariant measure associated to the process \eqref{process}.
For the proof of uniqueness, we refer the reader to \cite[Theorem 2.1]{MMT1}.
\end{proof}
\begin{remark}\rm {
Lions-Musiela in \cite{LM} have considered a similar degenerate case
but in their paper the elements of the matrix are bounded in $\re^2$ in this way
\begin{equation*}
\sigma(x)=\left(\begin{array}{cc} 1&0\\0&\frac{x_1}{\sqrt{1+x_1^2}}\end{array}\right).
\end{equation*}  }
\end{remark}
\begin{remark}{\rm
Following \cite[Example 5.1]{MMT1} can can obtain similar results of Theorem \ref{mgen} with a more general drift term $b$:
\begin{equation*}
  b_i(x)=  b_i(x_i),\qquad
\left\{\begin{array}{ll}
b_i(x_i)\leq-\frac{C_i}{|x_i|^{1-\beta}} &\textrm{for }x_i\geq R\\
b_i(x_i)\geq\frac{C_i}{|x_i|^{1-\beta}} &\textrm{for }x_i\leq -R
\end{array}\right.
\end{equation*}
for $\beta\geq 0$, $R>0$ and suitably chosen $C_i>0$ ($i=1,2$).}
\end{remark}
\begin{remark}
\label{ergodics}
{\rm
As applications of the existence of an invariant measure we obtain, arguing as in \cite{MMT1}
the following results:
$$
\lim_{\delta\rightarrow 0^+}\delta u_{\delta}(x)=  
\lim_{t\to +\infty}u(t, x)=
\lim_{t\to +\infty}\frac{v(t,x)}{t}=
\int_{\re^2}f dm,$$
where $m$ is the invariant measure of  and $u_{\delta}$, $u$ and $v$ are the solutions respectively of
$$\delta u_{\delta}+{\mathcal L}u_{\delta}=f(\cdot),  \qquad\textrm{in }\re^2,$$
$$
u_t+{\mathcal L}u=0\qquad\textrm{in }(0,+\infty)\times\re^2,\qquad u(0,\cdot)=f(\cdot)  \qquad \textrm{on }\re^2,$$
$$
v_t+{\mathcal L}v=f(\cdot)\qquad \textrm{in }(0,+\infty)\times\re^2,\qquad v(0,\cdot)=0  \qquad \textrm{on }\re^2,$$
and ${\mathcal L}$ is defined in (\ref{op_l}).}
\end{remark}

\section{Asymptotic behaviour for a singular perturbation problem}\label{sect:SP}
In this section,
we investigate the limit of the value function
\begin{equation*}
V^\epsilon(t,x,y):=\sup_{u \in{\cal U}} \mathbb E[\int_t^Tf(X_s,Y_s,u_s)ds+ e^{a(t-T)}g(X_T)]
\end{equation*}
where $\mathbb E$ denotes the expectation, $\cal U$ is the set of progressively measurable processes with values in a compact metric set ~$U$ and $a$ is a fixed positive parameter and $(X_t,Y_t)$ are driven by \eqref{dynXY} (note that $V^\epsilon$ depends on $\epsilon$ through the coefficients of the dynamics).\\
Throughout this section, we shall assume
\begin{itemize}
\item[$i$)] the function~$f=f(x,y,u)$ is Lipschitz continuous in~$(x,y)$ uniformly in $u$ and, for some $C_f>0$, it satisfies
\begin{equation*}
|f(x,y,u)|\leq C_f(1+|x|)\qquad \forall (x,y,u)\in \re^n\times \re^2\times U;
\end{equation*}
\item[$ii$)] 
the function~$g$ is continuous in~$(x,y)$ and there exits $C_g$ such that
\begin{equation*}
|g(x,y)|\leq C_g(1+|x|) \qquad \forall (x,y)\in \re^n\times\re^2;
\end{equation*}
\item[$iii$)] $\tilde \phi(x,y,u)$ and $\tilde\sigma(x,y,u)$ are Lipschitz continuous and bounded in $(x,y)$ uniformly on $u$:
 $\vert\tilde \phi(x,y,u)\vert\leq C_{\tilde \phi}$,  $\vert\tilde \sigma(x,y,u)\vert\leq C_{\tilde\sigma}$.
\end{itemize}

Problems of this type arise from models where the variables $Y$ evolve much faster than the variables $X$. We refer to \cite{BCM} and \cite{MMT2} for the financial models which inspired this research.

By standard theory (see~\cite{FS}), the value function $V^\epsilon$ is the unique (viscosity) solution to the following Cauchy problem
\begin{equation}\label{HJBSP}
\left\{\begin{array}{ll}
-\partial_tV^\epsilon+H\left(x,y,D_xV^\epsilon,D_{xx}^2V^\epsilon,\frac{D_{xy}^2V^\epsilon}{\sqrt\epsilon}\right)&\\
\qquad+\frac1\epsilon{\cal L}(y, D_yV^\epsilon,D_{yy}V^\epsilon)+a V^\epsilon=0 & \textrm{in }(0,T)\times \re^n\times\re^2\\
V^\epsilon(T,x,y)=g(x,y)& \textrm{on }\re^n\times\re^2
\end{array}\right.
\end{equation}
where ${\cal L}$ is the operator defined in~\eqref{op_l} and
\begin{eqnarray*}
H(x,y,p,X,Z)&:=&\min_{u\in U}\left\{-tr(\tilde \sigma\tilde \sigma^T X)-\tilde \phi\cdot p -2 tr(\tilde \sigma\sigma^T Z)-f(x,y,u) \right\}.
\end{eqnarray*}


Our aim is to establish that, as $\epsilon\to0^+$, the function~$V^\epsilon$ converges locally uniformly to a function~$V=V(t,x)$ (which will be independent of~$y$) which can be characterized as the unique (viscosity) solution to the {\it effective} Cauchy problem
\begin{equation}\label{EFFSP}
\left\{\begin{array}{ll}
-\partial_tV+\overline H\left(x,D_xV,D_{xx}^2V\right)+a V=0 &\quad \textrm{in }(0,T)\times \re^n\\
V(T,x)= \overline g(x)&\quad \textrm{on }\re^n.
\end{array}\right.
\end{equation}
The effective Hamiltonian and the effective terminal datum are given by
\begin{eqnarray}\label{opeff}
\overline H(x,p,X)&:=& \int_{\re^2}H(x,y,p,X,0) dm(y)\\ \label{datoeff}
\overline g(x)&:=& \int_{\re^2}g(x,y) dm(y)
\end{eqnarray}
and $m$ is the invariant measure established in Theorem~\ref{mgen}. As a matter of facts, $\overline H(x,p,X)$ is the ergodic constant~$\lambda$ of the cell problem
\begin{equation}\label{cell}
-tr(\sigma(y)\sigma^T(y)D^2 w(y))-b(y)Dw(y)+H(x,y,p,X,0)=\lambda\quad y\in\re^2,
\end{equation}
(the solution $w$ to this equation is called {\it corrector})
while $\overline g (x)$ is the constant obtained in the long time behaviour of the parabolic Cauchy problem
\begin{equation*}
\partial_t w^*- {\cal L} w^*=0\quad\textrm{in }(0,\infty)\times \re^2,\qquad w^*(0,y)=g(x,y)\quad\textrm{on } \re^2,
\end{equation*}
(namely $\overline g=\lim\limits_{t\to+\infty}w^*(t,y)$).

The main issues of this setting are: 1) the fast variables evolve in the whole space, 2) the infinitesimal generator of their operator is degenerate  with unbounded coefficients, 3) the variables $y$ lacks a group structure.
In order to overcome these issues, we shall use the following tools: 1) there exists a superlinear Lyapunov function,  2) a Liouville type result applies to operator ${\cal L}$, 3) there exists an invariant measure, 4) the cell problem admits a regular solution (we shall first prove that it is globally Lipschitz continuous and then we make a bootstrap argument) with an at most logarithmic growth. 

In order to prove the existence and the properties of $(\lambda, w)$ satisfying~\eqref{cell}, we introduce the approximated problems
\begin{equation}\label{probcellappross0}
\delta u_{\delta}-tr(\sigma(y)\sigma^T(y)D^2 u_{\delta})-b(y)Du_{\delta}=F(y)\qquad \textrm{in }\re^2,
\end{equation}
where  $\delta>0$ and $F(y):=-H(x,y,p,X,0)$ with $(x,p,X)$ fixed.
In the next subsection we investigate the properties of the approximated correctors~$u_{\delta}$; in the last subsection these properties will be inherited by the corrector~$w$.

\subsection{Regularity of the approximated correctors}
In this section we shall establish two results on the regularity of $u_\delta$ in two different setting for $F$: a global Lipschitz continuity  and a local H\"older continuity. In our opinion, both these results have their own interest because we apply two different techniques:  the former follows the ones of~\cite{CIL,IL} while the latter one follows the ones of~\cite{FIL}. However, in the rest of the paper we shall only need the former one.\\
Throughout this section we assume
\begin{equation}\label{L}
|F(y)|\leq C_F(|y|+1)\quad \forall y\in \re^2.
\end{equation}
Let us recall from \cite[Lemma 3.3]{MMT2} the following result on the growth of $u_\delta$; for the proof, we refer the reader to \cite{MMT2}.
\begin{lemma}\label{mmt2:l3.3}
Under assumptions~\eqref{L}, there exists a constant $C$ such that
\begin{equation}\label{sublindelta}
|u_\delta(y)|\leq C\bigg(|y|+ \frac{1}{\delta}\bigg), \quad y\in\re^2.
\end{equation}
\end{lemma}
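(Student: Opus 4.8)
The goal is to show that the solution $u_\delta$ to the approximated cell problem \eqref{probcellappross0} grows at most linearly in $|y|$, with a constant independent of $\delta$ beyond the explicit $1/\delta$ term. This is exactly the kind of estimate that follows from a comparison (maximum) principle argument against a cleverly chosen barrier function built from the Lyapunov function of the operator.

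\medskip

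The plan is to argue by the comparison principle for the elliptic operator $\delta u - \mathrm{tr}(\sigma\sigma^T D^2 u) - b\cdot Du$. First I would seek a supersolution of the form $\psi(y) = \frac{C_F}{\delta} + K\,\Phi(y)$, where $\Phi$ is a smooth function with linear-type growth (for instance built from the Lyapunov function $W(y) = \frac{1}{12}y_1^4 + \frac12 y_2^2$ used in Theorem~\ref{mgen}, or a smoothed version of $|y|$). The key computation is to apply the operator ${\cal L}$ to $\Phi$ and use the Ornstein--Uhlenbeck structure of the drift $b(y) = -\alpha y$: the term $-b\cdot D\Phi = \alpha\, y\cdot D\Phi$ provides a \emph{positive} coercive contribution that grows like $|y|$ at infinity, which is precisely what is needed to dominate the right-hand side bound $|F(y)|\le C_F(|y|+1)$ from \eqref{L}. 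One should check that the second-order term $-\mathrm{tr}(\sigma\sigma^T D^2\Phi)$ stays controlled (it is bounded or of lower order in $|y|$), so that for a suitable constant $K$ depending only on $C_F$, $\alpha$ and the structure constants, one obtains
\begin{equation*}
\delta\psi - \mathrm{tr}(\sigma\sigma^T D^2\psi) - b\cdot D\psi \geq C_F(|y|+1) \geq F(y) \quad\text{in }\re^2.
\end{equation*}

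\medskip

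Next I would verify that $\psi$ is indeed a supersolution and that $-\psi$ is correspondingly a subsolution (by symmetry, since $|F|\le C_F(|y|+1)$ controls $F$ from both sides), and then invoke a comparison principle to sandwich $u_\delta$ between $-\psi$ and $\psi$. The conclusion $|u_\delta(y)|\le C(|y| + 1/\delta)$ then follows by absorbing $\Phi(y)\le C(|y|+1)$ and the constant term into a single constant $C$. The main obstacle will be justifying the comparison principle itself in this unbounded, degenerate setting: the operator is subelliptic (degenerate where $y_1=0$) with unbounded coefficients, and $u_\delta$ is only known to have the a priori linear growth from Lemma~\ref{mmt2:l3.3}, so the standard maximum principle on all of $\re^2$ does not apply directly.

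\medskip

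To handle this, I would localize: compare $u_\delta$ and $\psi$ on large balls $B(0,R)$, where on the boundary one uses the growth bound \eqref{sublindelta} together with a slightly super-linear correction term (e.g. adding $\eta\, W(y)$ for small $\eta>0$) to guarantee that the perturbed barrier strictly dominates $u_\delta$ on $\partial B(0,R)$ as $R\to\infty$, and then let $\eta\to 0$. The superlinear growth of $W$ and the fact that $u_\delta$ grows only linearly are what make the boundary terms negligible in this limit, closing the argument. Since the comparison principle for the Grushin--Ornstein--Uhlenbeck operator on bounded domains is available from the cited viscosity-solution theory \cite{CIL}, the only genuinely new point is the barrier construction, which is precisely where the coercivity of the Ornstein--Uhlenbeck drift is exploited.
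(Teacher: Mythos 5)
The paper itself does not prove this lemma: it is quoted from \cite[Lemma 3.3]{MMT2} and the proof is explicitly deferred to that reference. Your overall strategy --- a linear-growth supersolution $\psi=\frac{A}{\delta}+K\Phi(y)$ exploiting the coercivity of the Ornstein--Uhlenbeck drift, combined with a comparison on large balls using a superlinear Lyapunov correction $\eta W$ and the limits $R\to\infty$, $\eta\to0$ --- is the right one and is essentially the standard argument; the localization step is handled correctly.

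There is, however, a genuine gap at the one computation that actually matters, namely your claim that for a linear-growth $\Phi$ the second-order term $-\mathrm{tr}(\sigma\sigma^TD^2\Phi)$ ``stays controlled (it is bounded or of lower order in $|y|$)''. This is false for the Grushin matrix $\sigma\sigma^T=\mathrm{diag}(1,y_1^2)$: for the natural choice $\Phi(y)=\sqrt{1+|y|^2}$ one computes $y_1^2\,\partial_{22}\Phi=y_1^2(1+y_1^2)/(1+|y|^2)^{3/2}$, which along the $y_1$-axis is of order $|y_1|$ --- exactly the same order as the favourable drift contribution $\alpha y\cdot D\Phi\approx\alpha|y|$. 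Hence the desired inequality $\delta\psi-\mathrm{tr}(\sigma\sigma^TD^2\psi)+\alpha y\cdot D\psi\geq C_F(1+|y|)$ closes only when $\alpha>1$ (with $K$ of order $C_F/(\alpha-1)$, the same threshold that appears in Proposition~\ref{lipschitzNuovo}), while your alternative suggestion, the Lyapunov function $W=\frac{1}{12}y_1^4+\frac12 y_2^2$, is indeed a supersolution but only yields a quartic bound, not \eqref{sublindelta}. To make the argument honest you must either impose $\alpha>1$, or use an anisotropic barrier such as $\Phi_\epsilon(y)=\sqrt{1+y_1^2+\epsilon y_2^2}$ with $0<\epsilon<\alpha$, for which $y_1^2\partial_{22}\Phi_\epsilon=\epsilon y_1^2(1+y_1^2)/\Phi_\epsilon^3\leq\epsilon y_1^2/\Phi_\epsilon$ is dominated by the drift term $\alpha y_1^2/\Phi_\epsilon$; this is precisely where the unbounded Grushin coefficient $y_1^2$ has to be confronted, and your sketch passes over it. A minor further point: invoking ``the a priori linear growth from Lemma~\ref{mmt2:l3.3}'' inside the proof of that same lemma is circular; the growth needed to start the localization should instead be the crude $\delta$-dependent bound $|u_\delta(y)|\leq C_\delta(1+|y|)$ coming from the stochastic representation of $u_\delta$ (or from a $\delta$-dependent barrier).
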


\subsubsection{Global Lipschitz continuity of the approximated corrector}
\begin{proposition}\label{lipschitzNuovo}
Assume $b$ as in~\eqref{sigma} with $\alpha>1$ and that $F$ is Lipschitz continuous in $\re^2$ with Lipschitz constant~$L$. Let $u_\delta$ be the unique continuous solution of (\ref{probcellappross0}) which satisfies \eqref{sublindelta}. Then, for $\bar L>L/(\alpha-1)$, there holds
$$|u_\delta(x)-u_\delta(y)|\leq \bar L|x-y| \qquad \forall  x,y\in\re^2,\delta>0.$$
\end{proposition}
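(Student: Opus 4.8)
The plan is to prove the one-sided bound $u_\delta(x)-u_\delta(y)\le \bar L\,|x-y|$ for all $x,y\in\re^2$; interchanging $x$ and $y$ then gives the full statement. I would argue by doubling the variables. Fix $\eta>0$ and set
\begin{equation*}
\Phi_\eta(x,y):=u_\delta(x)-u_\delta(y)-\bar L\,|x-y|-\eta(|x|^2+|y|^2),\qquad M_\eta:=\sup_{\re^2\times\re^2}\Phi_\eta .
\end{equation*}
Thanks to the linear growth of $u_\delta$ recalled in Lemma~\ref{mmt2:l3.3}, the difference $u_\delta(x)-u_\delta(y)$ grows at most linearly while the penalization is quadratic, so $\Phi_\eta\to-\infty$ at infinity and $M_\eta$ is attained at some point $(\bar x,\bar y)$. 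The aim is to show $M_\eta\le 4\eta/\delta$; letting $\eta\to0^+$ in $\Phi_\eta\le M_\eta$ then yields the claim. If $M_\eta\le0$ there is nothing to prove, so assume $M_\eta>0$; then $\bar x\ne\bar y$ (otherwise $\Phi_\eta(\bar x,\bar x)=-2\eta|\bar x|^2\le0$), and the penalization is smooth near $(\bar x,\bar y)$.

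I would then apply the Crandall--Ishii theorem on sums \cite{CIL}: for every $\mu>0$ there exist symmetric matrices $X,Y$ such that $(p_x,X)\in\overline J^{2,+}u_\delta(\bar x)$ and $(p_y,Y)\in\overline J^{2,-}u_\delta(\bar y)$, with $p_x=q+2\eta\bar x$, $p_y=q-2\eta\bar y$, $q:=\bar L\,(\bar x-\bar y)/|\bar x-\bar y|$, and $\mathrm{diag}(X,-Y)\le A+\mu A^2$, where $A$ is the Hessian of $\bar L\,|x-y|+\eta(|x|^2+|y|^2)$ at $(\bar x,\bar y)$. Writing the subsolution inequality at $\bar x$ and the supersolution inequality at $\bar y$ for \eqref{probcellappross0}, subtracting them, and using $|F(\bar x)-F(\bar y)|\le L\,|\bar x-\bar y|$, I reach
\begin{equation*}
\delta\bigl(u_\delta(\bar x)-u_\delta(\bar y)\bigr)-\bigl[\mathrm{tr}(a(\bar x)X)-\mathrm{tr}(a(\bar y)Y)\bigr]-\bigl[b(\bar x)\cdot p_x-b(\bar y)\cdot p_y\bigr]\le L\,|\bar x-\bar y|,
\end{equation*}
with $a:=\sigma\sigma^T$. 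The drift term is explicit: since $b(y)=-\alpha y$ and $(\bar x-\bar y)\cdot q=\bar L\,|\bar x-\bar y|$, one gets $-[b(\bar x)\cdot p_x-b(\bar y)\cdot p_y]=\alpha\bar L\,|\bar x-\bar y|+2\alpha\eta(|\bar x|^2+|\bar y|^2)$. This is exactly where the Ornstein--Uhlenbeck drift, and the hypothesis $\alpha>1$, generate the favourable term.

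The main obstacle is the degenerate second-order quantity $\mathrm{tr}(a(\bar x)X)-\mathrm{tr}(a(\bar y)Y)$, which I would estimate by testing the block inequality against $N:=(\sigma^T(\bar x),\sigma^T(\bar y))^T$. Using the identity $\mathrm{tr}\bigl(N^T\mathrm{diag}(X,-Y)N\bigr)=\mathrm{tr}(a(\bar x)X)-\mathrm{tr}(a(\bar y)Y)$ and the explicit $A$, a short computation gives $\mathrm{tr}(N^TAN)=\mathrm{tr}\bigl[(\sigma(\bar x)-\sigma(\bar y))B(\sigma(\bar x)-\sigma(\bar y))^T\bigr]+2\eta\,\mathrm{tr}(a(\bar x)+a(\bar y))$, with $B:=\bar L\,|\bar x-\bar y|^{-1}(I-\hat r\hat r^T)\ge0$ and $\hat r:=(\bar x-\bar y)/|\bar x-\bar y|$. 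Since $\sigma(\bar x)-\sigma(\bar y)$ has $\bar x_1-\bar y_1$ as its only nonzero entry, the first trace equals $\bar L\,(\bar x_1-\bar y_1)^4/|\bar x-\bar y|^3$, which is at most $\bar L\,|\bar x-\bar y|$ because $(\bar x_1-\bar y_1)^2\le|\bar x-\bar y|^2$. Letting $\mu\to0$ (legitimate, as $A$ is fixed once $\eta$ is, and $\bar x\ne\bar y$) yields $\mathrm{tr}(a(\bar x)X)-\mathrm{tr}(a(\bar y)Y)\le \bar L\,|\bar x-\bar y|+2\eta\,\mathrm{tr}(a(\bar x)+a(\bar y))$. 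Substituting the drift and trace bounds into the displayed inequality and using $\alpha>1$ (so that $\alpha(|\bar x|^2+|\bar y|^2)\ge \bar x_1^2+\bar y_1^2$) to absorb the quadratic terms, I obtain
\begin{equation*}
\delta\bigl(u_\delta(\bar x)-u_\delta(\bar y)\bigr)+\bigl[(\alpha-1)\bar L-L\bigr]|\bar x-\bar y|\le 4\eta .
\end{equation*}
Since $\bar L>L/(\alpha-1)$ the bracket is nonnegative, whence $\delta\,(u_\delta(\bar x)-u_\delta(\bar y))\le4\eta$ and therefore $M_\eta\le u_\delta(\bar x)-u_\delta(\bar y)\le4\eta/\delta$. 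Letting $\eta\to0^+$ finishes the argument. The whole difficulty lies in the degenerate diffusion: the scheme closes only because the unfavourable second-order contribution is of size $\bar L\,|\bar x-\bar y|$ and is dominated by the drift-generated term $\alpha\bar L\,|\bar x-\bar y|$.
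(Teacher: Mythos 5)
Your proposal is correct and follows essentially the same route as the paper's (sketched) proof: the same doubled test function $u_\delta(x)-u_\delta(y)-\bar L|x-y|-\eta|x|^2-\eta|y|^2$, the same threshold $4\eta/\delta$, the Crandall--Ishii theorem on sums tested against the columns of $\sigma$, and the observation that the Ornstein--Uhlenbeck drift produces $\alpha\bar L|\bar x-\bar y|$ while the Grushin diffusion costs at most $\bar L|\bar x-\bar y|$, so the scheme closes precisely when $\bar L(\alpha-1)>L$. You merely phrase the argument directly rather than by contradiction, and you supply the computations the paper delegates to \cite{MMT2}; all of them check out.
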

\begin{proof}
The proof follows the same arguments of the proof of \cite[Theorem 3.2]{MMT2}. For completeness, we briefly sketch the main steps.
For each $\eta>0$, we introduce the function
$$\Psi(x,y)=u_\delta(x)-u_\delta(y)-\bar L|x-y|-\eta|x|^2-\eta|y|^2.$$
Our statement is equivalent to the following inequality
\begin{equation}\label{claim}
\Psi(x,y)\leq \frac{4\eta}{\delta}\qquad \forall x,y \in\re^2,\, \eta\in(0,1).
\end{equation}
In order to prove \eqref{claim}, we argue by contradiction. Using the Lemma~\cite[Lemma 3.2]{CIL}, we follow the same calculation up to equation \cite[eq.(3.24)]{MMT2}. By our choice of the matrix~$\sigma$, we obtain the desired contradiction.
\end{proof}
\begin{remark}{\rm
As in \cite{MMT2}, for $b(x)=(-\alpha_1 x_1,-\alpha_2 x_2)$, we obtain the same result when $\alpha_1>1$, $\alpha_2>0$ and $\bar L>L/l$ where $l=\min\{\alpha_1-1,\alpha_2\}$.}
\end{remark}

\subsubsection{Local H\"older continuity of the approximated corrector}

\begin{proposition}
Assume $b$ as in~\eqref{sigma} with $\alpha>1$, \eqref{L} and
$$
|F(x)-F(y)|\leq C_F|x-y|^{\gamma}(\Phi(x)+\Phi(y)),\ x,y\in\re^2, \gamma\in(0,1], C_F>0$$
where $\Phi(x)= x_1^4+x_2^2+M$, $M\geq 1$.
Let $u_\delta$ be the unique continuous solution of (\ref{probcellappross0}) which satisfies \eqref{sublindelta}.
Then there is a constant $C>0$, independent on $\delta$ such that 
\begin{equation}\label{lipFIL}
|u_\delta(x)-u_\delta(y)|\leq C|x-y|^{\gamma}(\Phi(x)+\Phi(y)), \quad \forall\  x,y\in\re^2.
\end{equation}

\end{proposition}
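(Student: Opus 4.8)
The plan is to prove the weighted Hölder estimate \eqref{lipFIL} by the doubling-of-variables technique adapted to the degenerate Grushin operator, following the scheme of \cite{FIL}. The starting point is to fix $x_0,y_0\in\re^2$ and introduce, for parameters $\eta>0$ small and $K>0$ large to be chosen, the auxiliary function
\begin{equation*}
\Phi(x,y)=u_\delta(x)-u_\delta(y)-K|x-y|^{\gamma}(\Phi(x)+\Phi(y))-\eta(\Phi(x)+\Phi(y)),
\end{equation*}
where the quadratic--quartic weight $\Phi(x)=x_1^4+x_2^2+M$ plays the double role of a penalization ensuring that the supremum is attained at some interior point $(\bar x,\bar y)$, and of accommodating the unbounded Ornstein--Uhlenbeck drift. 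The claim to be established is that $\sup_{x,y}\Phi(x,y)\le 0$ for a suitable constant $K$ independent of $\delta$; evaluating at $x=x_0$, $y=y_0$ and letting $\eta\to0$ then yields \eqref{lipFIL}.

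First I would argue by contradiction, assuming the supremum is strictly positive. The penalization by $\eta(\Phi(x)+\Phi(y))$ together with the at most linear growth of $u_\delta$ from Lemma~\ref{mmt2:l3.3} forces the supremum to be attained at an interior maximum point $(\bar x,\bar y)$, and guarantees $\bar x\ne\bar y$ (since at $x=y$ the weighted term vanishes while $u_\delta(x)-u_\delta(x)=0$). Next I would invoke the Theorem on Sums (the Crandall--Ishii lemma, \cite[Theorem 3.2]{CIL}) to produce matrices $X,Y$ such that $(D_x\varphi(\bar x,\bar y),X)$ and $(-D_y\varphi(\bar x,\bar y),-Y)$ are respectively in the closures of the second-order superjet of $u_\delta$ at $\bar x$ and the subjet at $\bar y$, where $\varphi$ is the test part of $\Phi$. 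Writing the two viscosity inequalities coming from \eqref{probcellappross0} at $\bar x$ and $\bar y$ and subtracting them produces the fundamental inequality to be exploited.

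The heart of the argument, and the step I expect to be the main obstacle, is the estimate of the second-order terms $-\mathrm{tr}(\sigma(\bar x)\sigma^T(\bar x)X)+\mathrm{tr}(\sigma(\bar y)\sigma^T(\bar y)Y)$ arising from the degenerate diffusion. Because $\sigma(x)=\mathrm{diag}(1,x_1)$ vanishes along $\{x_1=0\}$, one cannot use the standard matrix inequality directly; instead I would use the specific block structure of the matrix inequality from the Theorem on Sums against the Hessian of the weight $|x-y|^{\gamma}(\Phi(x)+\Phi(y))$, tracking carefully the contributions of the singular entry $x_1^2$. The crux is that the second derivatives of the weight in the $x_1$ direction, being quartic, dominate the degeneracy of $\sigma$ and yield a sign-definite control; the first-order Ornstein--Uhlenbeck terms $-b(\bar x)D_x\varphi+b(\bar y)D_y\varphi$, thanks to $\alpha>1$, contribute a favorable negative term that absorbs the remaining positive contributions once $K$ is chosen large relative to $C_F$, $L$ and the structural constants. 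Collecting these estimates contradicts the strict positivity of the supremum, and closes the proof.
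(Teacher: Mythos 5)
Your overall strategy coincides with the paper's: both argue by contradiction at an interior maximum of $u_\delta(x)-u_\delta(y)$ minus a weight of the form $C|x-y|^\gamma(\Phi(x)+\Phi(y)+\cdot)$, in the spirit of Fujita--Ishii--Loreti \cite{FIL}, the maximum point existing by Lemma~\ref{mmt2:l3.3} and the coercivity coming from the Ornstein--Uhlenbeck drift together with $\alpha>1$. The only structural difference is that the paper writes the equation satisfied by $w_\delta(x,y)=u_\delta(x)-u_\delta(y)$ for the doubled operator $\Xi$ with diffusion matrix $\Sigma(x,y)$ and compares with the explicit supersolution $\tilde g$ via the maximum principle, whereas you would invoke the Crandall--Ishii Theorem on Sums \cite{CIL}; these are equivalent in substance.

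However, your proposal defers precisely the step that constitutes the proof, and the mechanism you announce for it is not the right one. The singular second-order terms of order $|x-y|^{\gamma-2}$ are not tamed because ``the second derivatives of the weight in the $x_1$ direction, being quartic, dominate the degeneracy of $\sigma$''; they are tamed because $\sigma$ is Lipschitz, so the doubled matrix $\Sigma(x,y)$ is positive semidefinite and its trace against $(x-y)\otimes(x-y)$ produces factors such as $(x_1-y_1)^2(x_2-y_2)^2$ and $(x_1-y_1)^2$ that cancel the singularity $|x-y|^{\gamma-2}$, while the unbounded cross terms (e.g.\ $x_1y_1$ against $D^2\Phi$) and the unbounded drift are absorbed by the Lyapunov-type inequality $-\Delta_{G}\Phi(z)+\alpha z\cdot D_z\Phi(z)\ge 2\alpha\Phi(z)-L_\alpha$, i.e.\ \eqref{theta}, which is where the quartic weight really enters. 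A second concrete issue is the normalization: with a multiplicative constant $K$ on the weight and an $\eta$-penalization sent to $0$, the residual additive constants ($L_\alpha$ and the constant $K_\alpha$ controlling $\alpha\Phi-4\gamma x_1x_2$) are themselves multiplied by $K$, so enlarging $K$ does not absorb them. The paper instead keeps the multiplicative constant equal to $C_F$ and inserts a large additive constant $A$ inside the weight, whose contribution $A(\delta+(\alpha-1)\gamma)$ --- positive precisely because $\alpha>1$ --- swallows $2L_\alpha+K_\alpha$ and yields the strict inequality contradicting \eqref{abs}; some such device is needed to close your argument.
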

\begin{proof}
We follow the procedure of \cite[Theorem 4.3]{FIL}.
We define the functions
$w_{\delta}(x,y)= u_\delta(x)-u_\delta(y)$  and 
 $\tilde g(x,y)= C_F|x-y|^{\gamma}(\Phi(x)+\Phi(y)+A)$ where $A$ will be chosen suitably large. If we  prove that $w_{\delta}\leq \tilde g$ in $\re^2$ then we obtain \eqref{lipFIL} 
 with a suitable $C$, since $\Phi > 1$.\\
 We argue by contradiction, we suppose that $\sup_{\re^2}(w_{\delta}-\tilde g)>0$. 
 From the linear growth of $u_\delta$ (see \eqref{sublindelta}) we know that
 $\lim_{|x|+|y|\to +\infty}(w_{\delta}(x,y)-\tilde g(x,y))=-\infty$ hence we have that $w_{\delta}\leq \tilde g $ in $(\re^2\times\re^2)\setminus B_R$ 
 for a suitable ball $B_R\subset\re^2\times\re^2$. 
 Let $(\hat x, \hat y)\in \overline{B_R}$ be maximum point  of $w_{\delta}(x,y)-\tilde g(x,y)$: $w_{\delta}(\hat x, \hat y)-\tilde g(\hat x, \hat y)>0$,
 $\hat x\neq \hat y$.
 At this point
we introduce the operator 
$\Xi$ defined as:
$$\Xi g(x,y)= tr(\Sigma(x,y)D^2g(x,y))+ 2\sum_{i,j}\frac{\partial^2}{\partial x_i\partial x_j}g(x,y)$$
where
$$
\Sigma(x,y)=\left(\begin{array}{cc}\sigma(x)\sigma^T(x) & \sigma(x)\sigma^T(y) \\\sigma(y)\sigma^T(x) & \sigma(y)\sigma^T(y)\end{array}
\right)= 
\left(\begin{array}{cccc}1 & 0 & 1 & 0 \\0 & x_1^2 & 0 & x_1y_1 \\1 & 0 & 1 & 0 \\0 & x_1y_1 & 0 & y_1^2\end{array}\right)
$$
(the matrix $\sigma$ is defined in \eqref{sigma}). This operator is elliptic.\\
We observe that $w_{\delta}(x,y)$ satisfies for any $x,y\in\re^2$
 \begin{eqnarray*}
  \delta w_{\delta} -\Xi w_{\delta}+\alpha x D_xw_{\delta}+\alpha yD_yw_{\delta} &=& F(x)-F(y)\\
  &\leq& C_F|x-y|^{\gamma}(\Phi(x)+\Phi(y)).
\end{eqnarray*}
Hence from the maximum principle
we have that 
\begin{equation}\label{abs}
 \delta \tilde g(\hat x, \hat y)-\Xi \tilde g(\hat x,\hat y)+\alpha \hat x D_x\tilde g(\hat x, \hat y)+\alpha yD_y\tilde g(\hat x, \hat y) \leq\\ 
C_F|\hat x-\hat y|^{\gamma}(\Phi(\hat x)+\Phi(\hat y)). 
\end{equation}
At this point to find a contradiction we compute 
$$\delta \tilde g(\hat x, \hat y)-\Xi \tilde g(\hat x,\hat y)+\alpha \hat x D_x\tilde g(\hat x, \hat y)+\alpha yD_y\tilde g(\hat x, \hat y)$$
directly by the definition of $\tilde g$.\\
 Denoting by $t=|x-y|^2$, let us introduce $g(x,y)$ as
 $$g(x,y)= t^{\gamma/2}(\Phi(x)+\Phi(y)+A).$$
 We compute now 
 $\Xi g(x,y)= tr(\Sigma(x,y)D^2g(x,y))$.
 We have
 $$D_xg= \gamma t^{\gamma/2-1}(x-y)(\Phi(x)+\Phi(y)+A)+t^{\gamma/2}D_x\Phi(x),$$
 $$D_yg= \gamma t^{\gamma/2-1}(y-x)(\Phi(x)+\Phi(y)+A)+t^{\gamma/2}D_y\Phi(y).$$
 \begin{eqnarray*}
 &&D^2_{xx}g= \gamma (\gamma-2)t^{\gamma/2-2}(x-y)\otimes (x-y)
 (\Phi(x)+\Phi(y)+A)+\\
 &&\gamma t^{\gamma/2-1}I(\Phi(x)+\Phi(y)+A)+2\gamma t^{\gamma/2-1}(x-y)\otimes D_x\Phi(x) +
 t^{\gamma/2}D^2_{xx}\Phi(x)
 \end{eqnarray*}
  \begin{eqnarray*}
 &&D^2_{yy}g= \gamma (\gamma-2)t^{\gamma/2-2}(y-x)\otimes (y-x)
 (\Phi(x)+\Phi(y)+A)+\\
 &&\gamma t^{\gamma/2-1}I(\Phi(x)+\Phi(y)+A)+2\gamma t^{\gamma/2-1}(y-x)\otimes D_y\Phi(y) +
 t^{\gamma/2}D^2_{yy}\Phi(y)
  \end{eqnarray*}
   \begin{eqnarray*}
&&D^2_{xy}g= -\gamma (\gamma-2)t^{\gamma/2-2}(x-y)\otimes (x-y)
 (\Phi(x)+\Phi(y)+A)-\\
&& \gamma t^{\gamma/2-1}I(\Phi(x)+\Phi(y)+A)+\gamma t^{\gamma/2-1}(x-y)\otimes(D_y\Phi(y)-D_x\Phi(x)).
  \end{eqnarray*}

Denoting by $A_{ij}$ the $2\times2$ minor of $\Sigma$ we have that
\begin{eqnarray*}
&&\Xi g(x,y)= tr(\Sigma(x,y)D^2g(x,y))=\\
&& tr(A_{11}D^2_{xx}g+A_{12}(D^2_{xy}g)^T+ A_{12}D^2_{xy}g+A_{22}D^2_{yy}g).
 \end{eqnarray*}
Using the explicit derivatives written here above and the definition of $\Phi$ we obtain
\begin{eqnarray*}
 &&\Xi g(x,y)= tr(\Sigma(x,y)D^2g(x,y))=\\ 
  &&\gamma (\gamma-2)t^{\gamma/2-2}(\Phi(x)+\Phi(y)+A)(x_2-y_2)^2(x_1-y_1)^2+\\
  &&t^{\gamma/2}(\Delta_{Gx}\Phi(x)+\Delta_{Gy}\Phi(y))+
 \gamma t^{\gamma/2-1}(\Phi(x)+\Phi(y)+A)(x_1-y_1)^2+\\
  &&+4\gamma t^{\gamma/2-1}(x_1-y_1)(x_2-y_2)(x_1x_2+y_1y_2)
  \end{eqnarray*}
  where we denoted by
  $\Delta_{G}u(z):=tr(\sigma(z)\sigma^T(z)u(z))$, i.e. the horizontal Grushin Laplacian operator.\\
 We note that, by elementary calculations,
it is possible to find a constant $L_{\alpha}$ such that 
\begin{equation}\label{theta}
-\Delta_{G}\Phi(z)+\alpha z D_z\Phi(z) \geq 2\alpha\Phi(z)-L_{\alpha}.
\end{equation}

  Now we write the equation
  \begin{eqnarray*}
  &&\delta g(x,y)-\Xi g(x,y)+\alpha x D_xg+\alpha yD_yg = \delta t^{\gamma/2}(\Phi(x)+\Phi(y)+A)+\\
  &&t^{\gamma/2}(-\Delta_{Gx}\Phi(x)+\alpha x D_x\Phi+ 
  (-\Delta_{Gy}\Phi(y)+\alpha y D_y\Phi))-\\
  && \gamma (\gamma-2)t^{\gamma/2-2}(\Phi(x)+\Phi(y)+A)(x_2-y_2)^2(x_1-y_1)^2\\
   && -\gamma t^{\gamma/2-1}(\Phi(x)+\Phi(y)+A)(x_1-y_1)^2-\\
    &&4\gamma t^{\gamma/2-1}(x_1-y_1)(x_2-y_2)(x_1x_2+y_1y_2)+
    \alpha \gamma t^{\gamma/2-1}(\Phi(x)+\Phi(y)+A)(x-y)^2\geq \\
   && \delta t^{\gamma/2}(\Phi(x)+\Phi(y)+A)+\\
    &&
   t^{\gamma/2}(2\alpha(\Phi(x)+\Phi(y))-2L_{\alpha})-\gamma t^{\gamma/2-1}(\Phi(x)+\Phi(y)+A)(x_1-y_1)^2-\\
   &&4\gamma t^{\gamma/2-1}(x_1-y_1)(x_2-y_2)(x_1x_2+y_1y_2)
   +\alpha \gamma t^{\gamma/2-1}(\Phi(x)+\Phi(y)+A)(x-y)^2,
  \end{eqnarray*}
  where in the last inequality we used \eqref{theta}.
 Hence by the definition of $t=|x-y|^2$ we have
 \begin{eqnarray*}
  &&\delta g(x,y)-\Xi g(x,y)+\alpha x D_xg+\alpha yD_yg \geq \\
  &&\delta |x-y|^{\gamma}(\Phi(x)+\Phi(y)+A)+\\
    &&
   |x-y|^{\gamma}(2\alpha(\Phi(x)+\Phi(y))-2L_{\alpha})-\gamma |x-y|^{\gamma-2}(\Phi(x)+\Phi(y)+A)(x_1-y_1)^2-\\
   &&4\gamma |x-y|^{\gamma-2}(x_1-y_1)(x_2-y_2)(x_1x_2+y_1y_2)+\\
   &&+\alpha \gamma |x-y|^{\gamma-2}I(\Phi(x)+\Phi(y)+A)(x-y)^2.
     \end{eqnarray*}
    Recall that $\tilde g(x,y)=C_Fg(x,y)$, hence $\tilde g$ satisfies:
  \begin{eqnarray*}
  &&\delta \tilde g(x,y)-\Xi \tilde g(x,y)+\alpha x D_x\tilde g+\alpha yD_y\tilde g \geq\\ 
&&C_F|x-y|^{\gamma}\bigg(2\alpha(\Phi(x)+\Phi(y))-2L_{\alpha}+(\delta-\gamma) (\Phi(x)+\Phi(y)+A)+\\
&&
+\alpha \gamma(\Phi(x)+\Phi(y)+A)-
   4\gamma(x_1x_2+y_1y_2)\bigg).
   \end{eqnarray*}
  Hence 
\begin{eqnarray*}
  &&\delta \tilde g(x,y)-\Xi \tilde g(x,y)+\alpha x D_xg+\alpha yD_y\tilde g \geq\\ 
&&C_F|x-y|^{\gamma}\bigg((\Phi(x)+\Phi(y))(\delta+(2\alpha -\gamma) +\gamma\alpha))+\\
&&A(\delta+(\alpha-1)\gamma)-
   2\gamma|x_1x_2+y_1y_2|-2L_{\alpha}\bigg).
   \end{eqnarray*}
   (We used that $|(x_1-y_1)(x_2-y_2)|\leq \frac{1}{2}(x-y)^2$.)
   Since $\alpha>1$ and $\gamma\in(0,1]$,
   \begin{eqnarray*}
  &&\delta \tilde g(x,y)-\Xi \tilde g(x,y)+\alpha x D_xg+\alpha yD_y\tilde g \geq\\ 
&&C_F|x-y|^{\gamma}\bigg((\Phi(x)+\Phi(y))+\\
&&\gamma\alpha(\Phi(x)+\Phi(y))+
A(\delta+(\alpha-1)\gamma)-
   4\gamma(x_1x_2+y_1y_2)-2L_{\alpha}\bigg)>\\ 
 &&C_F|x-y|^{\gamma}(\Phi(x)+\Phi(y)).
   \end{eqnarray*}
 The last inequality is obtained noting that:\\
 1) Since $\alpha>1$ we can find $K_{\alpha}>0$ such that 
 $\alpha\Phi(x)-4\gamma x_1x_2>-K_{\alpha}$.\\
 2) Since $\alpha>1$ we can choose $A$ sufficiently large such that $A(\delta+(\alpha-1)\gamma)-2L_{\alpha}-K_{\alpha}>0$.\\ 
Hence we obtain a contradiction of \eqref{abs}.
 \end{proof}
 \begin{remark}{\rm
 Note that if we consider a drift term of the type:
 $b(y)=(-\alpha_1y_1, -\alpha_2y_2)$ we can obtain the same result as before 
 taking $\gamma=1$ with $\alpha_1>1$ and $\alpha_2>0$. The calculations are tedious and we omit them.}
 \end{remark}

\subsection{The convergence result}\label{pbsp}

\begin{theorem}\label{maintheor}
Assume $\alpha>1$ and that, for $F(\cdot)=-H(x, \cdot, p, X,0)$
\begin{equation}\label{H}
\textrm{$F$, $\frac{\partial F}{\partial y_2}$ and $\frac{\partial^2 F}{\partial y_2^2}$ are bounded Lipschitz continuous functions.}
\end{equation}
Then, the solution $V^\epsilon$ of \eqref{HJBSP} converges locally uniformly in $(0,T)\times \re^n\times\re^2$ to the unique viscosity solution $V$ of \eqref{EFFSP} where $\overline H$ and $\overline g$ are defined in \eqref{opeff}-\eqref{datoeff}.
\end{theorem}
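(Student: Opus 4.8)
The plan is to combine the \emph{half-relaxed limits} technique with Evans' \emph{perturbed test function} method, the corrector being supplied by the cell problem \eqref{cell}. Define
$$\overline V(t,x,y):=\limsup_{\substack{\epsilon\to0\\ (t',x',y')\to(t,x,y)}}V^\epsilon(t',x',y'),\qquad \underline V(t,x,y):=\liminf_{\substack{\epsilon\to0\\ (t',x',y')\to(t,x,y)}}V^\epsilon(t',x',y').$$
First I would check that $\{V^\epsilon\}$ is locally equibounded uniformly in $\epsilon$: since by hypotheses $i$)--$iii$) the running cost $f$ and the datum $g$ grow at most linearly in $x$ and are bounded in $y$, while $\tilde\phi,\tilde\sigma$ are bounded, the stochastic representation yields $|V^\epsilon(t,x,y)|\le C(1+|x|)$ uniformly in $y$ and $\epsilon$, so $\overline V,\underline V$ are finite and $\underline V\le\overline V$. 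The whole scheme then reduces to showing that $\overline V$ is a subsolution and $\underline V$ a supersolution of the effective problem \eqref{EFFSP}, after which a comparison principle for \eqref{EFFSP} closes the argument.

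Before identifying the equation one must show that $\overline V$ and $\underline V$ are independent of $y$. This is forced by the singular ergodic term $\frac1\epsilon{\cal L}$: freezing $(t,x)$ and testing the relaxed limit against a function of $y$ alone, the $1/\epsilon$ factor penalises any genuine $y$-dependence, and the Liouville property for ${\cal L}$ behind Theorem~\ref{mgen} forces the limit to be constant in $y$. Here the superlinear Lyapunov function $W$ from Theorem~\ref{mgen} is used to guarantee that the test points in the fast variable cannot escape to infinity.

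The core step is the identification of the limiting equation through the corrector. I would first pass to the limit $\delta\to0^+$ in the approximated correctors $u_\delta$ of \eqref{probcellappross0} with $F(y)=-H(x,y,p,X,0)$: the uniform Lipschitz estimate of Proposition~\ref{lipschitzNuovo} (available because $\alpha>1$, and applicable since \eqref{H} places $F$ in the required class) gives, via Ascoli--Arzel\`a applied to $u_\delta-u_\delta(0)$, a corrector $w$ solving \eqref{cell}, while Remark~\ref{ergodics} identifies the ergodic constant as $\lambda=\overline H(x,p,X)$ of \eqref{opeff}; the hypotheses on $\partial_{y_2}F$ and $\partial_{y_2}^2F$ then feed the bootstrap improving the growth of $w$ to at most logarithmic. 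Given a smooth $\phi=\phi(t,x)$ touching $\overline V$ from above at $(t_0,x_0)$, I use the perturbed test function
$$\phi^\epsilon(t,x,y):=\phi(t,x)+\epsilon\,w\big(y;\,x_0,\,D_x\phi(t_0,x_0),\,D_{xx}^2\phi(t_0,x_0)\big).$$
Since $\phi$ is independent of $y$ and $w$ independent of $x$, the mixed Hessian $D^2_{xy}\phi^\epsilon$ vanishes, so the argument $Z=D^2_{xy}\phi^\epsilon/\sqrt\epsilon$ of $H$ is zero, exactly matching the definition \eqref{opeff}. Exploiting the linearity of ${\cal L}$ in its last two slots, $\frac1\epsilon{\cal L}(y,\epsilon D_yw,\epsilon D^2_{yy}w)$ equals $-tr(\sigma\sigma^TD^2w)-b\cdot Dw$, which by \eqref{cell} equals $\overline H(x_0,\cdot)-H(x_0,y,\cdot,0)$. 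Evaluating the viscosity inequality for $V^\epsilon$ at a maximum point $(t_\epsilon,x_\epsilon,y_\epsilon)$ of $V^\epsilon-\phi^\epsilon$, the two copies of $H$ cancel; letting $\epsilon\to0$ (using $x_\epsilon\to x_0$ and $\epsilon\,w(y_\epsilon)\to0$) leaves $-\partial_t\phi+\overline H(x_0,D_x\phi,D^2_{xx}\phi)+a\phi\le0$, the subsolution property. The supersolution property for $\underline V$ is symmetric.

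It remains to match the terminal datum and conclude. Through an initial-layer analysis I would prove $\overline V(T,x)\le\overline g(x)\le\underline V(T,x)$: by \eqref{datoeff}, $\overline g(x)=\int_{\re^2}g(x,y)\,dm(y)$ is precisely the long-time limit of the parabolic cell problem $\partial_t w^*-{\cal L}w^*=0$ with datum $g(x,\cdot)$ recorded after \eqref{cell}, and this rescaled long-time correspondence transfers to the behaviour of $V^\epsilon$ near $t=T$. Since \eqref{EFFSP} is a standard second-order (degenerate elliptic) HJB equation on $\re^n$ with Lipschitz Hamiltonian and data of at most linear growth, a comparison principle yields $\overline V\le V\le\underline V$; together with $\underline V\le\overline V$ this forces $\overline V=\underline V=V$, which is equivalent to the asserted local uniform convergence. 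I expect the \textbf{main obstacle} to be the control of the fast variable at the contact points: because $y$ ranges over all of $\re^2$ and ${\cal L}$ is degenerate with unbounded coefficients and no group structure, the usual compactness in $y$ is unavailable, and one must lean on the Lyapunov function to confine $y_\epsilon$ and on the logarithmic growth of the corrector to secure $\epsilon\,w(y_\epsilon)\to0$, both in the $y$-independence step and in the perturbed test function computation; the terminal-layer matching is the remaining delicate point.
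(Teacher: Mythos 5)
Your proposal follows essentially the same route as the paper: equiboundedness of $V^\epsilon$, passage to the limit $\delta\to0$ in the approximated cell problems to build the corrector and identify $\lambda$ with $\overline H$ via the invariant measure, and Evans' perturbed test function method combined with the Liouville property and the Lyapunov function to confine the fast variable. One small correction: the hypotheses \eqref{H} on $\frac{\partial F}{\partial y_2}$ and $\frac{\partial^2 F}{\partial y_2^2}$ are used in the paper not to improve the growth of $w$ to logarithmic (that follows from the explicit supersolution $C_1\log(y_1^4+y_2^2)$ in Proposition~\ref{TH33}) but to run the bootstrap of Proposition~\ref{regw} giving $w\in C^{2,\beta}_{loc}$, which is the regularity needed to use $\phi+\epsilon w$ as a test function.
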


\begin{proof}
The arguments of the proof are analogous to those of \cite[Theorem 2.1]{MMT2}; we only sketch them.
\begin{enumerate}
\item Well posedness of problem \eqref{HJBSP} and growth properties of $V^\epsilon$.
\begin{proposition}\label{Existence}
For any $\epsilon>0$ there exists a unique continuous viscosity solution~$V^\epsilon$ to problem \eqref{HJBSP} such that
\begin{equation*}
|V^{\epsilon}(t,x,y)|\leq C_0(1+|x|), \quad \forall (t, x, y)\in (0,T)\times\re^n\times \re^2
\end{equation*}
for some positive constant~$C_0$ independent on $\epsilon$.
In particular $\{V^{\epsilon}\}_\epsilon$ is a family of locally equibounded functions.
\end{proposition}
\begin{proof}
The proof is the same as \cite[Proposition 2.1]{MMT2}.
\end{proof}
\item The cell problem.\\
Let us consider the sequence of solutions of the approximated cell problem \eqref{probcellappross0} $\{u_{\delta}\}_{\delta}$. Using the Proposition \ref{lipschitzNuovo} we can define (at least for a subsequence) the 
$ \lim\limits_{\delta\rightarrow 0}(u_{\delta}(y)-u_{\delta}(0)):=w(y)$ and using the Lemma \ref{mmt2:l3.3} $ \lim\limits_{\delta\rightarrow 0}-\delta u_{\delta}(0)=:\lambda$. 
\\
Thanks to  Proposition \ref{lipschitzNuovo} $w$ is a global Lipschitz function and using the stability properties of viscosity solutions $(w,\lambda)$ is a solution of the ergodic problem \eqref{cell}.\\ Moreover:
\begin{proposition}\label{TH33}
The constant $\lambda=-\int_{\re^2} H(\overline x,y,\overline p, \overline X,0)dm(y)$ ($m$ is the invariant measure founded in Theorem \ref{mgen}) is the unique constant  such that the cell problem \ref{cell} admits a solution~$w$ with an at most linear growth at infinity. Moreover $w$ is globally Lipschitz continuous, satisfies
\begin{equation}\label{wlog}
|w(y)-w(0)|\leq C\left[1+\log(y_1^4+y_2^2+1)\right] \qquad \forall y\in \re^2
\end{equation}
and it is unique up to an additive constant within the function with an at most linear growth at infinity.
\end{proposition}
\begin{proof}
We refer the reader to Remark \ref{ergodics} to characterize $\lambda$.\\
Estimate \eqref{wlog} follows from an analogous estimate for $u_{\delta}$ that can be proved as in \cite[Lemma 3.4]{MMT2} taking as supersolution of \eqref{probcellappross0} the function
$g(y_1,y_2)=C_1\log(y_1^4+y_2^2)$ which satisfies
$$
\delta g-tr(\sigma\sigma^tD^2g)+\alpha yDg\geq C_1\bigg(\frac{2y_1^6-10y_1^2y_2^2}{(y_1^4+y_2^2)^2}+\alpha\frac{4y_1^4+y_2^2}{y_1^4+y_2^2}\bigg)\geq F(y),$$
for $y\in\re^2\setminus B_R$,
with suitable $C_1$ and $R$ sufficiently large.
Hence repeating the same argument as in \cite[Lemma 3.4]{MMT2} we get the result.
\end{proof}
\item $C^2$-regularity of the corrector.
\begin{proposition}\label{regw}
Let $w$ be the solution of the cell problem \eqref{cell} founded in Proposition~\ref{TH33}. Then $w\in C^{2,\beta}_{loc}(\re^2)$, for some $\beta\in(0,1)$.
\end{proposition}
\begin{proof}
In this proof, $\beta$ denotes a constant which may change from line to line.
The corrector $w$ solves 
\begin{equation}\label{eqconG}
-tr(\sigma(y)\sigma^T(y)D^2 w(y))+\alpha yDw(y)=G(y)
\end{equation}
with $G(y):=\lambda-H(\overline x,y,\bar p,\bar X,0)$.
First let us get the global Lipschitz continuity of $\frac{\partial w}{\partial y_2}$ and $\frac{\partial^2 w}{\partial y_2^2}$ .
Deriving equation \eqref{eqconG} with respect to $y_2$ (remark that this is possible because $G$ is regular enough thanks to \eqref{H}) we obtain that the function $u:=\frac{\partial w}{\partial y_2}$ is bounded by Proposition~\ref{lipschitzNuovo} and it solves in the sense of distributions
 \begin{equation}\label{equ}
-tr(\sigma\sigma^TD^2 u)+\alpha y Du+\alpha u=\frac{\partial G}{\partial y_2}.
\end{equation}
From Proposition~\ref{lipschitzNuovo} and \cite[Lemma 3.5]{MMT2}, we get that $\frac{\partial w}{\partial y_2}$ is globally Lipschitz continuous in $\re^2$.
Deriving again equation \eqref{equ} with respect to $y_2$ we obtain that the function $\frac{\partial^2 w}{\partial y_2^2}$  is globally Lipschitz continuous in $\re^2$.

Using the global Lipschitz continuity of $\frac{\partial w}{\partial y_2}$ and  Proposition~\ref{lipschitzNuovo} in \eqref{eqconG}, we infer: $\frac{\partial^2 w}{\partial y_1^2}\in L^\infty_{loc}$. Again, by the Lipschitz continuity of $\frac{\partial w}{\partial y_2}$, we obtain $\Delta w \in L^\infty_{loc}$; by standard elliptic theory, $Dw\in C^{0,\beta}_{loc}$. Using the global Lipschitz continuity of $\frac{\partial^2 w}{\partial y_2^2}$ in \eqref{eqconG}, we get $\frac{\partial^2 w}{\partial y_1^2}\in C^{0,\beta}_{loc}$. Again, by the Lipschitz continuity of $\frac{\partial^2 w}{\partial y_2^2}$, we have $\Delta w \in C^{0,\beta}_{loc}$. Applying standard theory, we accomplish the proof.
\end{proof}
\item Conclusion. \\
We adapt the classical perturbed test function method (see \cite{AB1,Ev,G2}) to prove the convergence. To this end, we argue as in \cite[Theorem 2.1]{MMT2} using the Liouville property for ${\cal L}$, the regularity of the corrector and the existence of a Lyapunov function ($W(y)=y_1^2+y_2^2$).
\end{enumerate}
\end{proof}

\noindent{\sc Acknowledgments. } \\
The first and the second authors  are members of the INDAM-Gnampa and are partially supported by the research project of the University of Padova "Mean-Field Games and Nonlinear PDEs" and by the Fondazione CaRiPaRo Project "Nonlinear Partial Differential Equations: Asymptotic Problems and Mean-Field Games". 
The third author has been partially funded by the ANR project ANR-16-CE40-0015-01.
The  third author wishes to thank for the hospitality the Department of Mathematics, University of Padova, where part of this work was done.

\vskip 0.3truecm

\noindent {\bf Address of the authors}\\
Paola Mannucci,\\
Dipartimento di Matematica "Tullio Levi-Civita",
Universit\`a degli Studi di Padova,
Via Trieste 63, 35131, Padova, Italy,\\
Claudio Marchi,\\
Dipartimento di Ingegneria dell'Informazione,
Universit\`a degli Studi di Padova,
Via Gradenigo 6/b, 35131, Padova, Italy,\\
Nicoletta Tchou, IRMAR, \\
Universit\`a de Rennes 1,
Campus de Beaulieu, 35042 Rennes Cedex, France. \\ \\
mannucci@math.unipd.it,\\ 
claudio.marchi@unipd.it,\\
nicoletta.tchou@univ-rennes1.fr

\end{document}